\definecolor{c1}{rgb}{0,0,1} 
\definecolor{c2}{rgb}{0,0.3,0.9} 
\definecolor{c3}{rgb}{0.3,0,0.9} 
\newtheorem{theorem}{Theorem}
\newtheorem{proposition}[theorem]{Proposition}
\newtheorem{lemma}[theorem]{Lemma} 
\newtheorem{definition}[theorem]{Definition}
\newcommand{\Dim}{\ensuremath{\textrm{Dim}_H}}
\newcommand{\diam}{\mbox{diam}}
\begin{document}


	\title[A Uniform Result for the Dimension of Level Sets]{\textbf{A Uniform Result for the Dimension of Fractional Brownian Motion Level Sets}}
	\author{LARA DAW}
	\date{\today}
\maketitle
	\begin{abstract}
Let $B =\{ B_t \, : \, t \geq 0 \}$ be a real-valued fractional Brownian motion of index $H \in (0,1)$. We prove that the macroscopic Hausdorff dimension of the level sets $\mathcal{L}_x = \left\{ t \in \mathbb{R}_+ \, : \, B_t=x \right\}$ is, with probability one, equal to $1-H$ for all $x\in\mathbb{R}$.\\
\\ 
\textit{Keywords:} Level sets; Fractional Brownian motion; Local times; Macroscopic Hausdorff dimension.  
\end{abstract}

\section{Introduction}

Let $B =\{ B_t \, : \, t \geq 0 \}$ be a fractional Brownian motion of index $H \in (0,1)$, that is, a centered, real-valued Gaussian process with covariance function
\begin{align}
R(s,t)= \mathbb{E} \left( B_s B_t\right)= \dfrac{1}{2}\left( \lvert s \rvert ^{2H} + \lvert t \rvert ^{2H} - \lvert s - t\rvert ^{2H}\right).
\end{align}
Since $\mathbb{E} \big[\left( B_s - B_t\right)^2\big]= \left|s - t\right|^{2H}$, it is an immediate consequence of the Kolmogorov–Centsov continuity theorem that $B$ admits a continuous modification. Throughout this note, we will always assume that $B$ is continuous. It is also immediate (see, e.g., \cite{nourdin2012selected}) that $B$ is a self-similar process of exponent $H$,
that is, for any $a > 0$,
\begin{align*}
\left\{ B_{at} \, : \, t \geq 0 \right\} \stackrel{d}{=} \left\{ a^H B_t \, : \, t \geq 0 \right\},
\end{align*}
where $X \stackrel{d}{=} Y$ means that two processes $X$ and $Y$ have the same distribution.
Moreover, $B$ has stationary increments, that is, for every $s \geq 0$ ,
\begin{align*}
\left\{ B_{t+s} - B_s \, : \, t \geq 0 \right\} \stackrel{d}{=} \left\{B_t \, : \, t \geq 0 \right\}.
\end{align*}

\medskip

This article is concerned with estimating the size of the level sets of $B$, which are defined for any $x\in\mathbb{R}$ as
\begin{align}
\label{DefLS}
\mathcal{L}_x = \left\{ t \geq 0 \, : \, B_t=x\right\}.
\end{align} 
\par This line of research started with the seminal work of Taylor and Wendel \cite{taylor1966}, who were the first to study the Hausdorff dimensions of the level sets (and of the graph) in the case of a standard Brownian motion. They proved among other things that, for any fixed $x\in\mathbb{R}$, each Brownian level set $\mathcal{L}_x$ has a Hausdorff dimension $\frac{1}{2}$ with probability one. Their results were extended later on by Perkins \cite{perkins1981} who showed that, with probability one, the level sets $\mathcal{L}_x$ have a Hausdorff dimension $\frac{1}{2}$  for all $x\in\mathbb{R}$. Hence, the local structure of the level sets in the Brownian case is well understood.

Another method to describe the geometric properties of the single paths of a given process is in terms of its sojourn times. Here, the goal is to study the dimension of the amount of time spent by the stochastic process inside a moving boundary, that is, of the form
\begin{align*}
E(\phi) := \left\{ t \geq 0 \, : \, |B_t| \leq \phi(t)\right\},
\end{align*}  
where $\phi:\mathbb{R}_+\to\mathbb{R}$ is an appropriate function.

Strongly related to our note, we mention the recent work of Nourdin, Peccati and Seuret \cite{nourdin2018sojourn}, in which a specific large scale dimension is computed for the sojourn times
\begin{align}
\label{sojournTime}
E_\gamma := \left\{ t \geq 0\, : \, |B_t| \leq t^\gamma \right\}, \quad 0<\gamma< H,
\end{align}
of the fractional Brownian motion $B$. 
Note that this choice for $\phi$ is completely 
natural here because, on the one hand, the fractional Brownian motion is selfsimilar (hence the choice of a power function for $\phi$) and, on the other hand, it satisfies a law of iterated logarithm as $t\to\infty$ (hence the range $(0,H)$ for $\gamma$). Actually, \cite{nourdin2018sojourn} extended
to the fractional Brownian motion the results given by Seuret and Yang \cite{seuret2019sojourn} 
in the framework of the standard Brownian case.

In general, defining a notion of fractal dimension for a subset of $\mathbb{R}^d$ involves taking into consideration the microscopic (i.e. local) properties of this set. However, many models in statistical physics are based on the Euclidean lattice $\mathbb{Z}^d$; in this case, it may look more natural to rely on the macroscopic (i.e. global) properties of the set to define a notion of dimension. 
This is what Barlow and Taylor proposed in \cite{barlow1989, barlow1992}. Their 
dimension, called {\it macroscopic Hausdorff dimension}, has proven to be relevant in many contexts.
This is the one that was used in  \cite{nourdin2018sojourn,seuret2019sojourn}, and also the one we will
use in the present note, because it can give a good intuition about the geometry of the set into consideration, precisely whether it is scattered or not. Precise definitions will be given in Section \ref{MHDsubsection}. At this stage, we only mention that we denote this macroscopic Hausdorff dimension by $\Dim$.

\medskip

Our note can be considered as an addendum to \cite{nourdin2018sojourn}. 
Let $\mathcal{L}_x$ be the level sets  associated with a fractional Brownian motion.
In  \cite{nourdin2018sojourn}, the following is shown.
\begin{theorem}\label{thm1}
	\normalfont
Fix $x \in \mathbb{R}$. Then 
\begin{align*}
\mathbb{P}(\Dim \mathcal{L}_x= 1-H)=1.
\end{align*}
\end{theorem}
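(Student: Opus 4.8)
The plan is to establish the two bounds $\Dim \mathcal{L}_x \le 1-H$ and $\Dim \mathcal{L}_x \ge 1-H$ separately, each almost surely. Recall that the macroscopic Hausdorff dimension is computed through the dyadic shells $\mathcal{S}_n = [2^n, 2^{n+1})$ and the gauges $\nu_\rho^n(\mathcal{L}_x)$ measuring the cheapest covering of $\mathcal{L}_x \cap \mathcal{S}_n$ by intervals $I$ of diameter at least one, weighted by $(\diam I / 2^n)^\rho$, so that $\Dim \mathcal{L}_x = \inf\{\rho : \sum_n \nu_\rho^n(\mathcal{L}_x) < \infty\}$.

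For the upper bound I would run a first-moment argument. Partition each shell $\mathcal{S}_n$ into its $2^n$ unit intervals $[k,k+1)$ and estimate $\mathbb{P}(\exists\, t \in [k,k+1): B_t = x)$. Since $B_k \sim \mathcal{N}(0,k^{2H})$ with $k \asymp 2^n$, and the increment of $B$ over a unit interval has order-one standard deviation, bounding the hitting event by $\{\min_{[k,k+1]} B \le x \le \max_{[k,k+1]} B\}$ and integrating the Gaussian density of $B_k$ near the fixed point $x$ (which is $\asymp 2^{-nH}$ for large $n$) against the exponentially small oscillation tails yields $\mathbb{P}(\exists\, t\in[k,k+1): B_t=x) \lesssim 2^{-nH}$. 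Hence the expected number of unit intervals meeting $\mathcal{L}_x \cap \mathcal{S}_n$ is $\lesssim 2^n \cdot 2^{-nH} = 2^{n(1-H)}$, so $\mathbb{E}[\nu_\rho^n(\mathcal{L}_x)] \lesssim 2^{n(1-H)} (2^{-n})^\rho = 2^{n(1-H-\rho)}$. For any $\rho > 1-H$ this is summable, so $\sum_n \nu_\rho^n(\mathcal{L}_x) < \infty$ almost surely by Tonelli, and letting $\rho \downarrow 1-H$ along a sequence gives $\Dim \mathcal{L}_x \le 1-H$ a.s.

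For the lower bound I would use the local time as a Frostman measure. Fractional Brownian motion admits a jointly continuous local time $(t,x) \mapsto L_t^x$, and the Stieltjes measure $dL_t^x$ is carried by $\mathcal{L}_x$, making it the natural candidate. Two ingredients are needed. First, a lower bound on the shell mass $L_{2^{n+1}}^x - L_{2^n}^x$: by self-similarity the rescaled process $\tilde B_s := 2^{-nH} B_{2^n s}$ is again a fractional Brownian motion, and the occupation formula shows that its local time on $[1,2]$ at the level $2^{-nH}x$ equals $2^{-n(1-H)}(L_{2^{n+1}}^x - L_{2^n}^x)$; thus the shell mass is $2^{n(1-H)}$ times an order-one random quantity $\tilde L_n$, namely the $[1,2]$ local time of a standard fractional Brownian motion at a level tending to $0$. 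Second, an upper bound on the mass of arbitrary subintervals, which is precisely the Hölder regularity of the local time in $t$: $L^x$ is almost surely Hölder of every order $\rho < 1-H$, so in the rescaled picture the local time measure of an interval of relative length $r/2^n$ is $\lesssim (r/2^n)^\rho$ with a random constant $C_n$. Feeding these into the Barlow–Taylor mass distribution principle gives $\nu_\rho^n(\mathcal{L}_x) \gtrsim \tilde L_n / C_n$.

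The main obstacle is then to promote this to $\sum_n \nu_\rho^n(\mathcal{L}_x) = \infty$ almost surely for every $\rho < 1-H$. The summands are identically distributed by self-similarity and each is positive with positive probability, but because fractional Brownian motion exhibits long-range dependence the shells are not independent, so a naive Borel–Cantelli argument does not apply. I expect this to be handled by showing that $\mathbb{P}(\tilde L_n / C_n \ge \delta)$ is bounded below uniformly in $n$ (uniform quantitative control on the local time's positivity and on its modulus of continuity over each shell) and that correlations between well-separated shells decay, so that a second-moment or conditional Borel–Cantelli argument forces infinitely many of these events to occur and the series to diverge, yielding $\Dim \mathcal{L}_x \ge 1-H$ a.s. Securing the \emph{uniform-in-$n$} lower bound on the local time together with its Hölder constant, and the attendant dependence bookkeeping, is the crux of the argument.
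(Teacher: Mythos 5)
Your upper bound is a sound alternative to the paper's route (the paper instead invokes $\Dim E_\gamma = 1-H$ for the sojourn set $E_\gamma=\{t : |B_t|\le t^\gamma\}$ and the fact that $\mathcal{L}_x$ is eventually contained in $E_\gamma$); the first-moment computation works, modulo a little care with the fact that $B_k$ and the oscillation of $B$ over $[k,k+1]$ are not independent when $H\neq 1/2$, which a dyadic decomposition of the oscillation handles at the cost of a harmless polynomial factor. Your lower-bound setup --- local time as a Frostman measure, shell mass $L^x(S_n)=2^{n(1-H)}Z_n^x$ with $Z_n^x$ of order one by self-similarity, H\"older modulus in $t$ to bound the mass of each covering interval --- is exactly the content of Lemma \ref{lemma2} (the paper uses Xiao's exact uniform modulus $h^{1-H}(\log_2(1/h))^{H}$ together with the logarithmic gauge $\widetilde{\nu}^{\,n}_{1-H,H}$ so that a single deterministic constant works for all shells and all levels simultaneously).

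The genuine gap is the step you yourself flag as the crux: proving $\sum_n Z_n^x=\infty$ almost surely. You propose a second-moment or conditional Borel--Cantelli argument resting on decay of correlations between well-separated shells. This is not carried out, and it is not a routine verification: because of the long-range dependence of fractional Brownian motion, quantitative decorrelation of local-time functionals over distinct dyadic shells is delicate and is not something you can simply cite. The paper's proof avoids correlation estimates entirely, using two ingredients absent from your plan. First, a Fubini trick (Lemma \ref{LemmaF^b}): on the event $A=\{\sum_n Z_n^x\le\gamma\}$ one has $\gamma\ge\sum_n\int_0^\infty\bigl(\mathbb{P}(A)-\mathbb{P}(Z_n^x\le u)\bigr)_+\,du$, and since $Z_n^x\stackrel{d}{=}Z_0^{2^{-nH}x}$ the summands are essentially identical, so finiteness forces $\mathbb{P}(A)\le\mathbb{P}(Z_0^{x'}\le u)$ for a.e.\ $u$ and hence $\mathbb{P}(\sum_n Z_n^x=\infty)\ge\mathbb{P}(Z_0^{x'}>0)>0$; only one-dimensional marginals enter, no joint law of distinct shells. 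Second, a zero--one law: under time inversion $\widehat{B}_u=u^{2H}B_{1/u}$ the tail $\sigma$-field of the shells becomes a germ $\sigma$-field at $0$, contained in a Brownian germ field by the Volterra representation \eqref{eqFBM}, so Blumenthal's law upgrades the positive probability to probability one. Without something playing the role of these two steps, your argument does not close.
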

Our aim is to extend Theorem \ref{thm1} from ``$\forall x$, $\mathbb{P}(\ldots)=1$''
to ``$\mathbb{P}(\forall x: \ldots)=1$''. 
To this end, new and non-trivial arguments are required. We will prove the following.
\begin{theorem}
	\normalfont
\label{LevelSets}
\begin{align}
\mathbb{P}(\forall x\in\mathbb{R}:\,\Dim \mathcal{L}_x = 1-H)=1.
\end{align} 
\end{theorem}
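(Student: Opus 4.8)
The plan is to prove the two bounds separately and uniformly in the level: first that $\Dim \mathcal{L}_x \le 1-H$ for every $x$ almost surely, and then that $\Dim \mathcal{L}_x \ge 1-H$ for every $x$ almost surely. Throughout I would work annulus by annulus, writing $\mathcal{S}_n = [2^{n-1},2^n)$, and exploit self-similarity to rescale each annulus to a fixed reference interval: since $\{B_{2^n t}\}\stackrel{d}{=}\{2^{nH}B_t\}$, understanding $\mathcal{L}_x\cap \mathcal{S}_n$ reduces to understanding the level set of the single process $\{B_t:t\in[1/2,1)\}$ at the rescaled level $y=2^{-nH}x$. As $x$ ranges over the effective range $[-c\,2^{nH},c\,2^{nH}]$ of $B$ on $\mathcal{S}_n$, the rescaled level $y$ stays in a fixed bounded set, which is what makes a uniform-in-$x$ statement on each annulus tractable. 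The central object throughout is the jointly continuous local time $L(x,t)$ of $B$, whose existence and regularity for $H\in(0,1)$ are classical; the local nondeterminism of $B$ is the key property underlying all the moment estimates below.

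For the upper bound, fix $\rho>1-H$ and let $N_n(x,r)$ be the number of length-$r$ intervals in a fixed net of $\mathcal{S}_n$ meeting $\mathcal{L}_x$. A first-moment computation gives $\mathbb{E}[N_n(x,r)]\asymp (2^n/r)^{1-H}$, so with $r=1$ the expected macroscopic content on $\mathcal{S}_n$ is $\asymp (2^n)^{1-H-\rho}$, summable in $n$; this is the fixed-$x$ reasoning behind Theorem~\ref{thm1}. The difficulty is that one first-moment bound cannot be unionized over the uncountable family of levels. I would instead estimate high moments $\mathbb{E}[N_n(x,1)^p]$, again uniformly in $x$ via local nondeterminism, and combine Markov's inequality with a union bound over a fine net of levels together with the uniform Hölder continuity of $B$ to pass from the net to all $x$. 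This should yield $\sup_x N_n(x,1)\lesssim (2^n)^{1-H}(\log 2^n)^{\beta}$ with a probability summable in $n$, and Borel–Cantelli then gives $\sum_n \nu^n_\rho(\mathcal{L}_x)<\infty$ simultaneously for all $x$; letting $\rho\downarrow 1-H$ closes this half.

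For the lower bound I would use the local time itself as a mass distribution on $\mathcal{L}_x$, via the measure $\mu_x=dL(x,\cdot)$, together with a macroscopic mass distribution principle: if, for infinitely many indices $n$, one has (i) the macroscopic mass bound $L(x,2^n)-L(x,2^{n-1})\ge c\,2^{n(1-H)}$ and (ii) the non-concentration estimate $\mu_x(I)\le K\,|I|^{1-H}(\log 2^n)^{\beta}$ for every interval $I\subseteq\mathcal{S}_n$ of length between $1$ and $2^n$, uniformly in $x$, then for each such $n$
\[
\nu^n_{1-H}(\mathcal{L}_x)\ \ge\ \frac{\mu_x(\mathcal{S}_n)}{K\,2^{n(1-H)}}\ \ge\ \frac{c}{K},
\]
so the divergence of $\sum_n \nu^n_{1-H}(\mathcal{L}_x)$ forces $\Dim \mathcal{L}_x\ge 1-H$. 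Estimate (ii) is a uniform-in-$x$ Hölder-type modulus for the occupation measure at all scales up to $2^n$, which holds with exponent $1-H$ up to a logarithmic correction and which I would recover from moment bounds on local time increments (via local nondeterminism), transported across scales by the self-similar rescaling described above.

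The main obstacle is establishing (i) uniformly in $x$. For a single level this is an ergodic/law-of-iterated-logarithm statement for the local time, essentially the lower bound behind Theorem~\ref{thm1}, but the quantifier exchange forbids a naive union: after rescaling, the probability that a \emph{fixed} bulk level is bad on a \emph{single} annulus is a positive constant $p$, which no per-annulus union over a net of $\sim 2^{n\theta}$ levels can absorb. My plan is therefore to group the scales into blocks, declaring a block good if at least one of its annuli satisfies (i), and to show that the probability a fixed level is bad on an entire block of length $K_n$ decays fast enough (roughly like $p^{K_n}$) to beat the net size, so that a union over levels followed by Borel–Cantelli yields infinitely many good blocks for every $x$ simultaneously; joint continuity of $L$ in $x$ transfers the bound with margin from the net to all levels. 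Making this precise requires a small-deviation (lower-tail) estimate for the local time, uniform over the bulk of levels, together with an \emph{approximate independence} of local-time increments across well-separated annuli. Since $B$ has long-range dependence, the latter is genuinely delicate and must be justified by conditioning arguments built on local nondeterminism rather than on any true independence. It is precisely this uniform lower-tail control of the local time, across both scales and levels, that upgrades the fixed-$x$ Theorem~\ref{thm1} to the uniform Theorem~\ref{LevelSets}, and it is where the new work lies.
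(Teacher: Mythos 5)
Your upper bound is much more work than needed, and its key step is shaky: the interval count $N_n(x,1)$ is neither monotone nor continuous in the level $x$, so the uniform H\"older continuity of $B$ only lets you pass from a net point $x'$ to the number of intervals on which $B$ comes within $\delta$ of $x$ --- that is, to a sojourn count. The paper short-circuits all of this with a pathwise inclusion: for any $\gamma\in(0,H)$ one has $\mathcal{L}_x\cap\{t\ge|x|^{1/\gamma}\}\subset E_\gamma$ for \emph{every} $x$ on \emph{every} sample path, and since $\Dim$ is insensitive to removing bounded sets, the single almost sure event $\{\Dim E_\gamma=1-H\}$ from the sojourn-set theorem already yields $\Dim\mathcal{L}_x\le 1-H$ simultaneously for all $x$, with no moments, no net and no Borel--Cantelli. (A smaller point: your displayed mass-distribution inequality drops the $(\log 2^n)^\beta$ from the non-concentration bound; with it, a sparse sequence of good annuli no longer forces divergence of $\sum_n\nu^n_{1-H}(\mathcal{L}_x)$. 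The paper handles this by building the logarithm into a modified gauge $\widetilde\nu^n_{\rho,\xi}$ and proving it defines the same dimension.)

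The genuine gap is in your lower bound, exactly where you flag it. Everything hinges on a quantitative approximate-independence claim: that conditionally on all earlier annuli, a fresh annulus is still ``good'' for a bulk level with probability bounded away from $0$, uniformly enough to give the decay $p^{K_n}$ that beats a net of $\sim 2^{n\theta}$ levels. For fractional Brownian motion with its long-range dependence this is precisely the hard analytic content, and invoking ``conditioning arguments built on local nondeterminism'' is not a proof. The paper shows no such estimate is needed. It sets $\widetilde Z^a_n=\inf_{|x|\le a}Z^x_n$ and $\widetilde F^b_\infty=\sum_n\widetilde Z^b_n$, proves $\mathbb{P}(\widetilde F^b_\infty=\infty)>0$ by soft means (positivity of $\mathbb{E}\,L^0([1/2,1])$, the spatial H\"older continuity of $x\mapsto L^x_t$, and a Fubini argument exploiting $\widetilde Z^b_n\stackrel{d}{=}\widetilde Z^{2^{-nH}b}_0$), and then observes that $\{\widetilde F^b_\infty=\infty\}$ lies in the germ $\sigma$-field at $0$ of the time-inverted process $\widehat B_u=u^{2H}B_{1/u}$, which by the Volterra representation is contained in the germ field of a Brownian motion; Blumenthal's $0$--$1$ law upgrades the positive probability to $1$, and taking the infimum over $x\in[-b,b]$ inside the sum gives the statement for all levels at once. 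This replaces your blocks, small-deviation estimates and quasi-independence entirely. Unless you can actually supply the uniform conditional lower-tail estimate for the local time across annuli, your plan does not close; the $0$--$1$ law route is the missing idea.
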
 

We note that our Theorem \ref{LevelSets} also recovers Seuret-Yang's result \cite[Theorem 2]{seuret2019sojourn} (Brownian motion), with what we believe is a more natural proof. 

\medskip

Throughout all the note, every random object is defined on a common probability space $\left(\Omega , \mathcal{A}, \mathbb{P}\right)$, and $\mathbb{E}$ denotes the expectation with respect to $\mathbb{P}$.
 
\section{Preliminaries}
\label{Preliminaries}

This section gathers the different tools that will be needed in order to prove Theorem \ref{LevelSets}.


\subsection{Macroscopic Hausdorff Dimension}
\label{MHDsubsection}

Following the notations of \cite{khoshnevisan2017intermittency, khoshnevisan2017macroscopic}, we consider the intervals $S_{-1} =[0,1/2)$ and $S_n = [2^{n-1},2^n)$ for $ n \geq 0$. For $E \subset \mathbb{R}^+$,  we define the set of {\it proper covers} of $E$ restricted to $S_n$ by 
\begin{align*}
\mathcal{I}_n(E) = \left\{ 
\begin{matrix}
\left\{I_i\right\}_{i=1}^{m}\, : & I_i=[x_i,y_i] \, \mbox{with} \, x_i,y_i \in \mathbb{N}, \, y_i>x_i,  \\ & I_i \subset S_n \, \mbox{ and } \, E \cap S_n \subset \bigcup_{i=1}^{m} I_i.
\end{matrix} \right\}
\end{align*}
For any set $E \subset \mathbb{R}^+$, $\rho \geq 0$ and $n \geq -1$, we define
\begin{align}
\label{nu^n_rho}
\nu_{\rho}^{n}(E) = \inf \left\{ \sum_{i=1}^{m} \left(\dfrac{\diam(I_i)}{2^n }\right )^\rho : \, \left\{I_i\right\}_{i=1}^{m} \in \mathcal{I}_n(E)\right \},
\end{align}
where $\diam([a,b])=b-a$.

The key point in the definition of $\nu_{\rho}^{n}(E)$  is that the sets $I_i$  are non-trivial intervals with {\it integer} boundaries; in particular, the infimum is reached. 
\begin{definition}
Let $E \subset \mathbb{R}^+$. The macroscopic Hausdorff dimension of $E$ is defined by 
\begin{align}
\Dim E = \inf  \left\{ \rho >0 : \, \sum_{n \geq -1} \nu_{\rho}^{n}(E) < + \infty \right \}.
\label{DefDim}
\end{align}
\end{definition}
We observe that $\Dim E$ always belongs to $[0,1]$, whatever $E \subset \mathbb{R}^+$. Indeed, 
consider the family $I_i=[2^{n-1}+i-1,2^{n-1}+i]$, $1\leq i\leq 2^{n-1}$, which belongs to $\mathcal{I}_n(E)$ and satisfies $\sum_{i=1}^{m} \left(\dfrac{\diam(I_i)}{2^n }\right )^\rho=\frac12 2^{n(1-\rho)}$.
Thus, $\nu^n_{1+\varepsilon}(E)\leq 2^{-n\varepsilon}$ for all $\varepsilon>0$, implying in turn that $\Dim E\leq 1+\varepsilon$ for all $\varepsilon>0$. As a result, we have that
$\Dim E\in[0,1]$.

In (\ref{nu^n_rho}), the covers are chosen to have length larger than 1. This shows that the macroscopic Hausdorff dimension does not rely on the local structure of the underlying set.

The dimension of a set is unchanged when one removes any bounded subset, since the series in (\ref{DefDim}) converges if and only if its tail series converges. 
Consequently, the dimension of any bounded set $E$ is zero. But the converse is not true, for example $\Dim (\{ 2^n,\,n\geq 1\})=0$.

The macroscopic Hausdorff dimension not only counts the number of covers of a set but also it gives an intuition about the geometry of the set. Precisely, the more the points of the set are scattered, the larger its dimension. For instance for $0< \alpha < 1$, define the two sets $A_\alpha$ and $B_\alpha$ by for all $n \geq 1$,
\begin{align*}
A_\alpha \cap S_n = \left\{2^{n-1}+ k \frac{2^{n-1}}{2^{n\alpha}} : k \in \{0,...,2^{n\alpha}-1 \}\right\};
\\ B_\alpha \cap S_n = \left\{2^{n-1}+ \frac{k}{2^{n\alpha}} : k\in \{0,...,2^{n\alpha}-1 \}\right\}.
\end{align*}
Even though both sets have same cardinality but $\Dim A_\alpha = \alpha$ whereas $\Dim B_\alpha = 0$.

These features make the  macroscopic Hausdorff dimension an interesting quantity describing the large scale geometry of a set; in particular, it appears to be well suited for the study of the level sets $\mathcal{L}_x$.

As we will see in our upcoming analysis, it might be sometimes wise to slightly modify the way $\Dim E$ is defined, to get a  definition that is more amenable to analysis. For this reason, let  us introduce, for any $E \subset \mathbb{R^+}$, $\rho>0$, $\xi \geq 0$, and $n \geq -1$, the quantity 
\begin{align}
 \widetilde{\nu}_{\rho, \xi}^{n}(E) = \inf  \left\{ \sum_{i=1}^{m} \left(\dfrac{\diam(I_i)}{2^n }\right )^\rho \left| \mbox{log}_2\dfrac{\diam(I_i)}{2^n }\right|^{\xi} :\left\{I_i\right\}_{i=1}^{m} \in \mathcal{I}_n(E) \right \}. 
\label{DefDim2}
\end{align}
The difference between $\nu_{\rho}^{n}(E)$ and $ \widetilde{\nu}_{\rho, \xi}^{n}(E) $  is that we introduce a logarithmic factor in the latter. This modification has actually no impact on the definition of $\Dim E$, as stated by the following lemma.
\begin{lemma}\label{lm3}
Let $\xi \geq 0$. For every set $E \subset \mathbb{R}^+$,
\begin{align}
\label{MHDmod}
\Dim E = \inf \left\{ \rho >0 : \, \sum_{n \geq -1} \widetilde{\nu}_{\rho,\xi}^{n}(E) < + \infty  \right\}.
\end{align}
\end{lemma}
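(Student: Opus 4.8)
The plan is to prove the two inequalities $\Dim E \le D_\xi$ and $D_\xi \le \Dim E$, where $D_\xi$ denotes the right-hand side of \eqref{MHDmod}. The whole argument rests on comparing, for a \emph{fixed} proper cover $\{I_i\}\in\mathcal{I}_n(E)$, the two summands attached to a single interval, namely $d_i^\rho$ and $d_i^\rho\,|\log_2 d_i|^\xi$, where I abbreviate $d_i:=\diam(I_i)/2^n$. The crucial structural fact is that an interval $I_i\subset S_n$ with integer endpoints and $\diam(I_i)\ge 1$ satisfies $2^{-n}\le d_i\le 1/2$, so that $t_i:=|\log_2 d_i|=-\log_2 d_i$ always obeys $1\le t_i\le n$. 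It is the lower bound $t_i\ge 1$, together with $\xi\ge 0$, that drives everything.

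First I would treat the easy direction. Since $t_i\ge 1$ gives $|\log_2 d_i|^\xi\ge 1$, every summand defining $\widetilde{\nu}_{\rho,\xi}^{n}(E)$ dominates the corresponding summand defining $\nu_\rho^n(E)$; taking the infimum over the common family $\mathcal{I}_n(E)$ yields $\nu_\rho^n(E)\le \widetilde{\nu}_{\rho,\xi}^{n}(E)$ for every $n$. Consequently, convergence of $\sum_n \widetilde{\nu}_{\rho,\xi}^{n}(E)$ forces convergence of $\sum_n \nu_\rho^n(E)$, and comparing the admissible exponents in \eqref{DefDim} and \eqref{MHDmod} gives $\Dim E\le D_\xi$.

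The reverse inequality is where the real work lies, and the key idea is to trade a little of the exponent $\rho$ for the logarithmic factor. Fix any $\rho>\Dim E$ and choose an intermediate $\rho'$ with $\Dim E<\rho'<\rho$, so that $\sum_n \nu_{\rho'}^n(E)<\infty$ by definition of $\Dim E$. For a single interval I would write $d_i^\rho\,|\log_2 d_i|^\xi = d_i^{\rho'}\,\bigl(2^{-(\rho-\rho')t_i}\,t_i^{\xi}\bigr)$ and observe that, because exponential decay beats polynomial growth, the constant $C:=\sup_{t\ge 1} 2^{-(\rho-\rho')t}\,t^{\xi}$ is finite and, crucially, depends only on $\rho-\rho'$ and $\xi$ but \emph{not} on $n$. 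Hence each summand obeys $d_i^\rho\,|\log_2 d_i|^\xi\le C\,d_i^{\rho'}$. Applying this to a cover that realizes the infimum $\nu_{\rho'}^n(E)$ — recall the paper's remark that this infimum is attained because the endpoints are integers — gives $\widetilde{\nu}_{\rho,\xi}^{n}(E)\le C\,\nu_{\rho'}^n(E)$ for every $n$. Summing and using the uniformity of $C$ yields $\sum_n \widetilde{\nu}_{\rho,\xi}^{n}(E)\le C\sum_n \nu_{\rho'}^n(E)<\infty$, so $\rho$ is admissible in \eqref{MHDmod}; letting $\rho\downarrow \Dim E$ gives $D_\xi\le \Dim E$, and the two inequalities together give the claimed equality.

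I expect the main obstacle to be precisely this uniform-in-$n$ control of the conversion factor: the naive estimate $|\log_2 d_i|\le n$ would only give $\widetilde{\nu}_{\rho,\xi}^{n}(E)\le n^{\xi}\,\nu_\rho^n(E)$, and the spurious factor $n^{\xi}$ could destroy convergence of the series. The remedy — spending an arbitrarily small amount $\rho-\rho'>0$ of exponent to absorb the logarithm into a bounded constant — is the one genuinely non-routine point. The remaining bookkeeping is routine: the attainment of the infimum, the limit $\rho\downarrow \Dim E$, and the harmlessness of the shells $S_n$ with small $n$ (for $n\le 1$ the family $\mathcal{I}_n(E)$ contains at most the empty cover, so $\widetilde{\nu}_{\rho,\xi}^{n}$ and $\nu_\rho^n$ coincide there and the comparison for $n\ge 2$ suffices).
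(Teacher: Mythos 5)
Your proposal is correct and follows essentially the same route as the paper: the easy inequality comes from $\diam(I_i)/2^n\le 1/2$ forcing the logarithmic factor to be at least $1$, and the reverse inequality comes from sacrificing an arbitrarily small amount $\epsilon=\rho-\rho'$ of exponent to absorb the logarithm into a constant independent of $n$ (your bound $2^{-(\rho-\rho')t}t^\xi\le C$ for $t\ge 1$ is exactly the paper's bound $|\log_2 x|^\xi\le c\,x^{-\epsilon}$ on $(0,1]$ after the substitution $x=2^{-t}$). The only cosmetic difference is that the paper separates the trivial case $\Dim E=1$ and works with $\rho<1$, which your formulation does not need.
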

\begin{proof}
Define $\tilde{d}_\xi = \inf \left\{ \rho >0 : \, \sum_{n \geq -1} \widetilde{\nu}_{\rho}^{n,\xi}(E) < + \infty  \right\}$.
For $n \geq -1$, consider $\left\{ I_i \right\}_{i=1}^{m} \in \mathcal{I}_n(E)$. As $I_i \subset S_n$, one has $\diam(I_i) \leq 2^{n-1}$, implying in turn that 
$\left| \mbox{log}_2  \dfrac{\diam (I_i)}{2^n} \right|^{\xi}\geq 1$.
Thus, $\widetilde{\nu}_{\rho,\xi}^{n}(E) \geq \nu_{\rho}^{n}(E) $ and then $\Dim E \leq \tilde{d}_\xi$. 

If $\Dim E=1$, the conclusion is straightforward. So, let us assume that $\Dim E<1$ and let us fix $\epsilon>0$ small enough and $\rho<1$ such that $ \rho > \Dim E + \epsilon$. Since the function $x \mapsto x^\epsilon \left|\log_2 x\right|^\xi$ is continuous on $(0,1]$ and tends to zero as $x$ tends to zero, it follows that there exists $c>0$ such that 
\begin{align*}
 \left|\log_2 x\right|^\xi \leq c x^{-\epsilon}, \, \forall x \in (0,1]
\end{align*}
We deduce that, for all $\left\{ I_i\right\}_{i=1}^{m} \in \mathcal{I}_n(E)$, 
\begin{align*}
\sum_{i=1}^{m} \left(\dfrac{\diam(I_i)}{2^n }\right )^\rho \left| \mbox{log}_2\dfrac{\diam(I_i)}{2^n }\right|^{\xi} \leq c\sum_{i=1}^{m} \left(\dfrac{\diam(I_i)}{2^n }\right )^{\rho-\epsilon} 
\end{align*}
By taking the infimum over all $\left\{ I_i\right\}_{i=1}^{m} \in \mathcal{I}_n(E)$ and recalling the definitions \eqref{nu^n_rho} and \eqref{DefDim2}, one deduces that $ \widetilde{\nu}_{\rho, \xi}^{n}(E) \leq  c\nu_{\rho - \epsilon}^{n}(E)$, implying in turn $\widetilde{d}_\xi \leq \rho - \epsilon$. Letting $\rho$ tend to $\Dim E + \epsilon$ yields the result.

\end{proof}



\subsection{Local Time of Fractional Brownian Motion}

As we will see, the use of the local time will play a key role throughout the proof of Theorem \ref{LevelSets}.

Provided it exists, the local time $x\mapsto L_t^x$ of a given process $(X_t)_{t \geq 0}$ is, for each $t$, the density of the occupation measure $\mu_t(A) = \mbox{Leb} \left\{s \in [0, t]\, : \, X_s \in A \right\}$ associated with $X$; otherwise stated, one has $L_t = \frac{d \mu_t}{d\mbox{Leb}}$. In what follows, we shall also freely use the notation $L_t([a,b])$ to indicate the quantity $L_t(b)-L_t(a)$.

The case where $X$ is Gaussian (and centered, say) has been widely studied in the literature. 
For instance, we can refer to the survey by Dozzi \cite{dozzi2003occupation}.
One of the main striking results in the Gaussian framework is the following easy-to-check condition that ensures that $(L_t^x)_{t\in [0,T],x\in\mathbb{R}}$  exists in $L^2(\Omega)$ :
\begin{equation}
\label{LemmaBddI}
I:= \int \int_{[0,T]^2} \dfrac{ds \, dt}{\sqrt{R(s,s)R(t,t)-R(s,t)^2}} < + \infty,
\end{equation}
where $R(s,t)=\mathbb{E}\left(X_s X_t\right)$; morever, in this case we have the Fourier type representation:
\begin{align}
L^{x}_{t} = \dfrac{1}{2\pi} \int_{\mathbb{R}} dy \, e^{-iyx} \int_{0}^{t} du \, e^{iyB_u}.
\label{deflt}
\end{align}

If $X$ is Gaussian, selfsimilar of index $H$ and satisfies (\ref{LemmaBddI}), then it is immediate from (\ref{deflt}) that its local time at level $x$ also have some selfsimilarity properties in time with index $1-H$, but with a different level as stated below. More precisely, one has, for every $c>0$:
\begin{align}
\label{ss}
(L^{x}_{ct})_{t\geq 0, x\in\mathbb{R}} 
\stackrel{d}{=} 
c^{1-H} (L^{c^{-H}x}_{t})_{t\geq 0, x\in\mathbb{R}}.
\end{align}

When $X$ stands for the fractional Brownian motion $B$ of  Hurst index $H\in(0,1)$, it is immediate that (\ref{LemmaBddI}) and (\ref{ss}) are satisfied.
But we can go further. A consequence of Berman's work \cite{berman1973local} is that the local time associated to $B$ is  $\beta-$H\"older continuous in $t$ for every $\beta \leq 1-H$ and uniformly in $x$. 
On their side, German and Horowitz (see \cite[Theorem 26.1]{german1980Occupation}) proved that, for all fixed $t$, the local time $(L_{t}^{x})_{x \in \mathbb{R}}$ admits the H\"older regularity in space  stated in the following lemma.

\begin{lemma}[Spatial H$\ddot{\mbox{o}}$lder continuity of local time]
Assume $X$ is a fractional Brownian motion of Hurst index $H\in(0,1)$ and 
consider its local time $(L_{t}^{x})_{x \in K}$, where $K$ is a given compact interval in $\mathbb{R}$. 
Then, for all $\beta \in\big(0, \frac{1}{2} \left( \frac{1}{H} - 1 \right)\big)$ and for all $t \geq 0$, 
\begin{align}
\label{defC}
\mathbb{P}\left(\sup_{x,y \in K} \dfrac{\left| L_t([x,y])\right|}{|x-y|^\beta} < \infty\right)=1.
\end{align}
\label{lemma3}
\end{lemma}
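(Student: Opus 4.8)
The plan is to establish the moment bounds needed for Kolmogorov's continuity criterion, applied to the field $x \mapsto L_t^x$ for fixed $t$. Starting from the Fourier representation \eqref{deflt}, I would write, for $x,x' \in K$ and any integer $k \geq 1$,
\[
\mathbb{E}\big[(L_t^{x'} - L_t^x)^{2k}\big] = \frac{1}{(2\pi)^{2k}} \int_{\mathbb{R}^{2k}} \prod_{j=1}^{2k}\big(e^{-iy_j x'} - e^{-iy_j x}\big) \int_{[0,t]^{2k}} \mathbb{E}\Big[e^{i\sum_{j} y_j B_{u_j}}\Big]\, du\, dy,
\]
and then use that $B$ is a centered Gaussian process to replace the inner expectation by $\exp\big(-\tfrac12 \mathrm{Var}(\sum_{j} y_j B_{u_j})\big)$. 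The elementary bound $|e^{-iy_jx'} - e^{-iy_jx}| \leq 2^{1-\gamma}|y_j|^\gamma |x'-x|^\gamma$, valid for every $\gamma \in [0,1]$, then extracts the power $|x'-x|^{2k\gamma}$ and leaves an integral carrying the weight $\prod_{j} |y_j|^\gamma$ against the Gaussian density.

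The crux is to control
\[
\int_{\mathbb{R}^{2k}} \prod_{j=1}^{2k}|y_j|^\gamma \int_{[0,t]^{2k}} \exp\Big(-\tfrac12 \mathrm{Var}\big(\textstyle\sum_{j} y_j B_{u_j}\big)\Big)\, du\, dy,
\]
for which I would invoke the strong local nondeterminism of fractional Brownian motion. After splitting $[0,t]^{2k}$ into the $(2k)!$ regions according to the ordering of the $u_j$, on the simplex $\{0 < u_1 < \cdots < u_{2k} < t\}$ local nondeterminism furnishes a constant $c = c(H,k) > 0$ with
\[
\mathrm{Var}\Big(\sum_{j=1}^{2k} y_j B_{u_j}\Big) \geq c \sum_{j=1}^{2k}\Big(\sum_{l \geq j} y_l\Big)^2 (u_j - u_{j-1})^{2H}, \qquad u_0 := 0.
\]
Performing the unimodular change of variables $w_j = \sum_{l\geq j}y_l$ decouples the Gaussian into a product over $j$, and bounding $|y_j|^\gamma = |w_j - w_{j+1}|^\gamma \leq C(|w_j|^\gamma + |w_{j+1}|^\gamma)$ reduces the $y$-integral to a sum of products of one-dimensional Gaussian moments
\[
\int_{\mathbb{R}}|w|^{a}\,e^{-\frac{c}{2} w^2 (u_j-u_{j-1})^{2H}}\,dw = C_a\, (u_j - u_{j-1})^{-H(a+1)},
\]
in which each exponent $a$ belongs to $\{0,\gamma,2\gamma\}$.

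It then remains to integrate the resulting bound $\prod_{j} (u_j - u_{j-1})^{-H(a_j+1)}$ over the ordered simplex. Since each $w_j$ appears in at most two adjacent factors $|w_j - w_{j\pm1}|^\gamma$, the largest exponent that can occur is $a_j = 2\gamma$, so the time integral converges precisely when $H(2\gamma + 1) < 1$, that is, when $\gamma < \tfrac12\big(\tfrac1H - 1\big)$; this is exactly the range appearing in the statement, and it is the one genuinely delicate point of the argument. Collecting the estimates yields $\mathbb{E}[(L_t^{x'} - L_t^x)^{2k}] \leq C_{k,t}\,|x'-x|^{2k\gamma}$. Applying Kolmogorov's continuity criterion with exponent $p = 2k$ then gives a modification of $x \mapsto L_t^x$ that is $\theta$-Hölder on $K$ for every $\theta < \gamma - \tfrac{1}{2k}$; letting $k \to \infty$ and $\gamma \uparrow \tfrac12(\tfrac1H-1)$ reaches every $\beta$ in the asserted interval, and the a.s. finiteness of the Hölder seminorm over the compact set $K$ is precisely \eqref{defC}. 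The main obstacle, as indicated, is the Gaussian integral estimate: everything hinges on having strong local nondeterminism available to bound the variance from below, and on the careful bookkeeping of the exponents, which is what pins the threshold at $\tfrac12(\tfrac1H-1)$ rather than at $\tfrac1H - 1$.
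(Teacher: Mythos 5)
The paper does not actually prove this lemma: it is imported verbatim from Geman and Horowitz (their Theorem 26.1), so there is no internal argument to compare yours against. Your sketch is, in substance, the classical proof underlying that citation (Berman's method, as refined by Pitt and Xiao): Fourier representation \eqref{deflt} of the local time, even moments of spatial increments, the interpolation bound $|1-e^{i\theta}|\le 2^{1-\gamma}|\theta|^{\gamma}$, strong local nondeterminism of fractional Brownian motion to lower-bound the variance by $c\sum_j w_j^2(u_j-u_{j-1})^{2H}$ after the triangular change of variables $w_j=\sum_{l\ge j}y_l$, and the exponent bookkeeping $a_j\le 2\gamma$ on the ordered simplex, which pins the convergence condition at $H(2\gamma+1)<1$, i.e.\ $\gamma<\frac12\left(\frac1H-1\right)$ --- exactly the range in the statement. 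The resulting moment bound $\mathbb{E}\big[(L_t^{x'}-L_t^{x})^{2k}\big]\le C_{k,t}|x'-x|^{2k\gamma}$ combined with Kolmogorov's criterion and $k\to\infty$ then reaches every $\beta$ in the asserted interval, so the argument is correct. Two routine points you should make explicit in a complete write-up: first, the interchange of expectation with the $dy\,du$ integrals in the moment formula must be justified by first checking absolute convergence, and that check already requires the local nondeterminism lower bound, so the logical order is ``variance bound first, Fubini second''; second, Kolmogorov produces a continuous modification of $x\mapsto L_t^x$, and \eqref{defC} is a statement about that version (the one the paper implicitly works with as the density of the occupation measure). Neither point affects the validity of your approach.
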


As we will see, Lemma \ref{lemma3} will be one of our main key tools in order to prove Lemma \ref{a} (which is one of the steps leading to the proof of Theorem \ref{LevelSets}).



\subsection{Filtration of Fractional Brownian Motion}

A last crucial property of the fractional Brownian $B$ that we will use in order to to prove Theorem \ref{LevelSets}, is that the natural filtration associated with $B$ is Brownian. 
We mean by this that  there exists a standard Brownian motion $(W_u)_{u \geq 0}$ defined on the same probability space than $B$ such that its filtration satisfies, 
for all $t > 0$,
\begin{equation}
\label{eqFBM}
\sigma\{ B_u \, : \, u \leq t \} \subset \sigma \{ W_u \, : \, u \leq t\}.
\end{equation}

Property (\ref{eqFBM}) is an immediate consequence of the Volterra representation of $B$ (see, e.g.,  \cite{Baudoin2003equivalence}).
It will be exploited together with the Blumenthal's $0-1$ law, in the end of the proof of Proposition \ref{proposition4}.

\section{Proof of Theorem \ref{LevelSets}}

\subsection{Upper bound for Dim$_H \mathcal{L}_x$}

By a theorem in \cite{nourdin2018sojourn}, for every $\gamma\in (0,H)$, a.s.
\begin{align*}
\Dim E_\gamma = 1 - H.
\end{align*}
On the other hand, observe that for a fixed $\gamma > 0$ and $x \in \mathbb{R}$, the level set $\mathcal{L}_x$ is ultimately included in $E_\gamma$. 
Indeed, 
\begin{align*}
\mathcal{L}_x \cap \left\{ t \geq |x|^{\frac1\gamma}\right\} \subset E_\gamma.
\end{align*} 
We have recalled in Section \ref{MHDsubsection} that the macroscopic Haussdorff dimension is unsensitive to the suppression of any bounded subset. 
As a result, 
 a.s. for every $x \in \mathbb{R}$, 
 $$\Dim \mathcal{L}_x = \Dim \left( \mathcal{L}_x \cap \left\{ t \geq  |x|^{\frac1\gamma}\right\} \right) \leq \Dim E_\gamma = 1 - H.$$


\subsection{Lower bound for Dim$_H \mathcal{L}_x$}
\label{LowerBound}

Recall $S_n$ from Section \ref{MHDsubsection}, and
let us  introduce the random variables 
\begin{equation}
Z_{n}^{x} = \dfrac{L^{x}\left(S_n\right)}{2^{n(1-H)}}  \quad \mbox{and} \quad F^{x}_{N} = \sum_{n=1}^{N} Z_{n}^{x}.
\label{def}
\end{equation}
The random variables $\left(Z_{n}^{x} \right)_{n \geq -1}$ are positive, so $(F_{N}^{x})_{N \geq 1}$ is non-decreasing. We denote by $F_{\infty}^{x}$ its limit, i.e. $ F_{\infty}^{x} = \sum_{n=-1}^{\infty} Z_{n}^{x} \in [0, + \infty]$.

Using (\ref{ss}), we have for all $n \geq 0$ 
\begin{equation}
\label{Ynx}
Z_{n}^{x} \stackrel{d}{=} Z_{0}^{2^{-nH}x}.
\end{equation}
We note that  similar random variables $Y_{n}^{x} = \dfrac{L^{2^{n}x}\left(S_n\right)}{2^{n(1-H)}} $ were introduced in \cite[Section 5.3]{nourdin2018sojourn}.
However, the fact that we are dealing with other space variables compared to \cite{nourdin2018sojourn} induce several differences in our proofs.
Although its statement is exactly the same than \cite[Lemma 5]{nourdin2018sojourn}, the meaning and proof in our context of the next lemma are different (albeit quite close). 
This is why we provide all the details, for the convenience of the reader.

\begin{lemma}
There exists a (deterministic) constant $K>0$ such that
\begin{equation*}
\mathbb{P}\left(\forall x\in\mathbb{R}, \forall n\geq -1:\,\widetilde{\nu}_{1-H,H}^{n}(\mathcal{L}_x) \geq K^{-1} Z_{n}^{x}\right)=1.
\end{equation*}
\label{lemma2}
\end{lemma}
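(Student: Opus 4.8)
The plan is to combine two ingredients: the fact that the time-indexed local time measure $dL^x_t$ is carried by the level set $\mathcal{L}_x$, and a uniform-in-level modulus of continuity for $t\mapsto L^x_t$ at the critical rate, transported across the dyadic scales $S_n$ by the scaling relation (\ref{ss}).

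First I would exploit that, for each fixed $x$, the nondecreasing function $t\mapsto L^x_t$ increases only at times $t$ with $B_t=x$; equivalently, the Stieltjes measure $dL^x_t$ is supported on $\mathcal{L}_x$. Hence, for any proper cover $\{I_i\}_{i=1}^m\in\mathcal{I}_n(\mathcal{L}_x)$, since $\mathcal{L}_x\cap S_n\subset\bigcup_i I_i$ and each $I_i\subset S_n$,
\[
L^x(S_n)=\int_{S_n}dL^x_t=\int_{\mathcal{L}_x\cap S_n}dL^x_t\leq\sum_{i=1}^m\int_{I_i}dL^x_t=\sum_{i=1}^m L^x(I_i).
\]
This reduces the lemma to bounding each increment $L^x(I_i)$ by a constant multiple of $2^{n(1-H)}\big(\diam(I_i)/2^n\big)^{1-H}\big|\log_2(\diam(I_i)/2^n)\big|^{H}$. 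To pass to a single scale I would use (\ref{ss}) with $c=2^n$ together with the level shift $x\mapsto 2^{-nH}x$ to write $L^x(I_i)\overset{d}{=}2^{n(1-H)}\widehat L^{\,2^{-nH}x}(2^{-n}I_i)$, where $\widehat L$ is the local time of an \textsc{fbm} and $2^{-n}I_i\subset S_0$ has diameter $\diam(I_i)/2^n\leq 1/2$. It then suffices to establish a \emph{unit-scale} estimate: almost surely, uniformly over all levels $y$ and all time increments $J$ of length at most $1/2$, one has $\widehat L^{\,y}(J)\leq K\,|J|^{1-H}\,\big|\log_2|J|\big|^{H}$. Granting this, applying it to $J=2^{-n}I_i$, summing, dividing by $2^{n(1-H)}$, and taking the infimum over proper covers yields $\widetilde{\nu}_{1-H,H}^{n}(\mathcal{L}_x)\geq K^{-1}Z_n^x$.

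The main obstacle is exactly this unit-scale modulus of continuity, and in particular obtaining it simultaneously for all levels $x\in\mathbb{R}$ and all scales $n\geq -1$ with a single \emph{deterministic} constant. Berman's theorem \cite{berman1973local} gives Hölder continuity of $t\mapsto L^x_t$ of every order $\beta<1-H$, uniformly in $x$, but reaching the critical order $1-H$ with the $(\log)^{H}$ correction, and—more delicately—promoting a ``for each scale, almost surely'' bound into an ``almost surely, for all scales'' bound, is where the genuinely new input is required. The subtlety is that the self-similar copies $\{L^x(I):I\subset S_n\}$ are only identically distributed, not identical, so a single deterministic constant cannot be read off scale by scale; the hardest case is increments whose size is comparable to the block length $2^{n-1}$, where the sharp almost sure bound is most fragile. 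I would resolve the level shift by proving the unit-scale estimate uniformly over all $y\in\mathbb{R}$ (harmless since $\widehat L^{\,y}(S_0)$ vanishes outside the almost surely bounded range of $B$ on $S_0$, reducing this to a statement over a compact range of levels), and I would pass from individual scales to all scales at once by a Borel--Cantelli argument, using precisely the logarithmic gain $|\log_2|^{H}$ furnished by $\xi=H$ to make the relevant series summable and thereby fix the deterministic constant $K$.
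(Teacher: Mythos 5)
Your overall architecture matches the paper's: you use that the occupation measure $dL^x_t$ is carried by $\mathcal{L}_x$ to get $L^x(S_n)\leq\sum_{i}L^x(I_i)$, you transport everything to unit scale via the self-similarity \eqref{ss}, and you reduce the lemma to an almost sure bound $L^{y}(J)\leq K\,|J|^{1-H}\,\bigl|\log_2|J|\bigr|^{H}$ holding simultaneously for all levels $y$ and all intervals $J$ of length at most $1/2$, with a single deterministic $K$. Up to that point this is exactly the paper's argument, which packages the reduction as the random variables $A_n$ in \eqref{eqself} together with the identity in law $A_n\stackrel{d}{=}A_0$.

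The gap is in how you propose to obtain that unit-scale estimate and to make it hold for all scales at once. A Borel--Cantelli argument over the scales $n$ cannot work here: by \eqref{ss} the blockwise suprema $A_n$ are identically distributed, so $\sum_{n}\mathbb{P}(A_n>K)$ is either $0$ or $+\infty$, and summability forces $\mathbb{P}(A_0>K)=0$ --- which is precisely the single-scale deterministic bound you were hoping to avoid having to prove directly. The logarithmic factor gives you nothing at this stage either: in the regime you yourself single out as hardest, $\diam(I_i)\asymp 2^{n-1}$, the factor $\bigl|\log_2(\diam(I_i)/2^{n})\bigr|^{H}$ is of order one, so there is no gain to feed into any summability argument. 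What actually closes this step in the paper is an external input, namely the sharp uniform H\"older condition for the local time of fractional Brownian motion, \cite[Theorem 1.2]{xiao1997holder}, which supplies the deterministic constant in \eqref{eq1bis}; once one has $\mathbb{P}(A_0\leq K)=1$ for a deterministic $K$, equidistribution plus a countable intersection over $n$ finishes the proof. Your proposal identifies the right reduction and correctly locates the difficulty (Berman's $\beta<1-H$ H\"older continuity is indeed insufficient, and the critical exponent with the $(\log)^{H}$ correction is the whole point), but the mechanism you offer for resolving it does not produce the required deterministic constant, so the key step of the lemma remains unproved.
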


\begin{proof}
Let us introduce the random variables 
\begin{equation}
\label{eqself}
A_n := \sup_{0 \leq t \leq 2^{n}} \sup_{0 \leq h \leq 2^{n-1}} \sup_{y \in \mathbb{R}} \dfrac{L^{y}\left([t,t+h]\right)}{h^{1-H}(n - \log_{2}h)^{H}},
\end{equation}
where $\log_2$ stands for the binary logarithm (base 2).
By (\ref{ss}), we have 
\begin{align}
\label{X_n}
A_n := & \sup_{0 \leq t \leq 1} \  \sup_{0 \leq h \leq 1/2}  \ \sup_{y \in \mathbb{R}} \dfrac{L^{y}\left([2^n t, 2^n(t+h)]\right)}{(2^n h)^{1-H}(- \log_{2}h)^{H}} \\
\stackrel{d}{=} & \sup_{0 \leq t \leq 1}  \  \sup_{0 \leq h \leq 1/2} \ \sup_{y \in \mathbb{R}} \dfrac{L^{y}\left([t,t+h]\right)}{h^{1-H}( - \log_{2}h)^{H}}. \nonumber
\end{align}
By \cite[Theorem 1.2]{xiao1997holder}, there exists a (deterministic) constant $K > 0$ such that 
\begin{align}
\mathbb{P}\left(\mbox{for  every } \ n \geq -1,  \ A_n \leq K\right)=1.
\label{eq1bis}
\end{align}

Now fix $x \in \mathbb{R}$, and consider the level set $\mathcal{L}_x$ defined by (\ref{DefLS}). Recall the definition (\ref{DefDim2}) of $\widetilde{\nu}^{n}_{1-H,H}(\mathcal{L}_x)$. 
If $\left(I_i= [s_i,t_i]\right)_{i=1}^{m} \in \mathcal{I}_n(\mathcal{L}_x)$ is a cover minimizing the value in (\ref{DefDim2}), we have
\begin{align}
\label{nu_1-H}
\widetilde{\nu}^{n}_{1-H,H}(\mathcal{L}_x) 
= & \sum_{i=1}^{m} \left(\dfrac{|t_i - s_i|}{2^n}\right)^{1-H} \left|\log_2 \dfrac{|t_i - s_i|}{2^n}\right|^{H}.
\end{align}
Using \eqref{X_n}-\eqref{eq1bis} with $t= \dfrac{s_i}{2^n}$, $h= \dfrac{t_i - s_i}{2^n}$, and $y = x$, we deduce that
\begin{align*}
\left(\dfrac{|t_i - s_i|}{2^n}\right)^{1-H} \left|\log_2 \dfrac{|t_i - s_i|}{2^n}\right|^{H} \geq  K^{-1}\dfrac{L^{x}(I_i)}{2^{n(1-H)}}.
\end{align*}
Back to \eqref{nu_1-H}, we get  
\begin{align*}
\widetilde{\nu}^{n}_{1-H,H}(\mathcal{L}_x) \geq
 K^{-1} \sum_{i=1}^{m} \dfrac{L^{x}(I_i)}{2^{n(1-H)}} \geq  
 K^{-1}  \dfrac{L^{x}(S_n)}{2^{n(1-H)}} =  K^{-1} Z^{x}_{n}. 
\end{align*}
where the last inequality holds because the local time $L^x_{\cdot}$ increases only on the set $I_i$ (whose union covers $\mathcal{L}_x \bigcap S_n$). This proves the claim. 
\end{proof}
Using Lemma \ref{lemma2} for the first inclusion and Lemma \ref{lm3} for the second one, we can write
$$
\{\forall x \in \mathbb{R}, \, F_{\infty}^{x} = + \infty\}\subset\{\forall x\in\mathbb{R},\,
\sum_{n\geq -1} \widetilde{\nu}^{n}_{1-H,H}(\mathcal{L}_x) =+\infty
\}\subset\{\forall x\in\mathbb{R}, \,\Dim \mathcal{L}_x \geq 1-H\}.
$$
As a consequence, we see that in order to conclude the proof of Theorem \ref{LevelSets}, it remains us to check that 
$\mathbb{P}(\forall x \in \mathbb{R}, \, F_{\infty}^{x} = + \infty ) = 1$. This is the object of the next proposition.

\begin{proposition}
\label{proposition4} We have
\begin{align}
\mathbb{P}(\forall x \in \mathbb{R}, \, F_{\infty}^{x} = + \infty ) = 1
\label{prop}
\end{align}
\end{proposition}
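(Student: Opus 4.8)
The plan is to deduce the uniform statement from two ingredients: a \emph{robustness} (propagation) property of $x\mapsto F_\infty^x$ coming from the spatial regularity of local time, which reduces the problem to a single level, and a zero--one law that upgrades a positive-probability statement to an almost-sure one. I would organize the argument in three stages. For the propagation, the idea is that after rescaling the levels become negligible, so the normalized quantities $Z_n^x$ depend very weakly on $x$ for large $n$. Rescaling the block $S_n=[2^{n-1},2^n)$ to $[1/2,1)$ by self-similarity \eqref{ss} and invoking the spatial H\"older continuity of Lemma \ref{lemma3}, one expects, for each compact $K\subset\mathbb{R}$ and each admissible $\beta\in\big(0,\tfrac12(\tfrac1H-1)\big)$, a bound of the form
\begin{equation*}
\sup_{x,x'\in K}\frac{|Z_n^x-Z_n^{x'}|}{|x-x'|^{\beta}}\le C_n,\qquad\text{with}\quad \sum_{n\ge -1}C_n<\infty\ \text{ a.s.}
\end{equation*}
The key point is that the rescaling contributes a factor $2^{-nH\beta}$, which is summable; a Borel--Cantelli argument based on moment estimates for the spatial H\"older seminorm of local time (of the Xiao type already used for \eqref{eq1bis}) would turn this distributional bound into an almost-sure, simultaneous-in-$n$ one. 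Summing over $n$ then gives $F_\infty^{x'}\ge F_\infty^{x}-|x-x'|^{\beta}\sum_n C_n$, so on a single almost-sure event the set of levels with $F_\infty^x=+\infty$ is either empty or all of $K$. Hence it suffices to prove $F_\infty^0=+\infty$ almost surely.

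For the second stage, positive probability at level $0$, observe that by \eqref{Ynx} we have $Z_n^0\stackrel{d}{=}Z_0^0$, so $\mathbb{E}Z_n^0=c$ and $\mathbb{E}(Z_n^0)^2=c_2$ are finite constants, with $c>0$ since $\mathbb{E}\,L^0(S_0)=\int_{1/2}^1 (2\pi t^{2H})^{-1/2}\,dt>0$. Thus $\mathbb{E}F_N^0=cN\to\infty$, while Cauchy--Schwarz gives $\mathbb{E}(F_N^0)^2=\sum_{n,m}\mathbb{E}[Z_n^0Z_m^0]\le c_2N^2$. The Paley--Zygmund inequality then yields a lower bound $\mathbb{P}(F_N^0\ge \tfrac{c}{2}N)\ge c^2/(4c_2)>0$ uniform in $N$, and since $(F_N^0)_N$ is nondecreasing, reverse Fatou gives $\mathbb{P}(F_\infty^0=+\infty)\ge c^2/(4c_2)>0$.

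For the third stage, the zero--one law, it remains to promote this positive probability to one. Since convergence or divergence of $\sum_n Z_n^0$ is unaffected by removing finitely many blocks, the event $\{F_\infty^0=+\infty\}$ lies in $\bigcap_{N}\sigma\{B_t:t\ge 2^{N-1}\}$, the tail field at infinity. Using the time-inversion invariance of fractional Brownian motion, the process $\tilde B_s:=s^{2H}B_{1/s}$ is again an fBm of index $H$, and this tail field is carried onto the germ field $\bigcap_{t>0}\sigma\{\tilde B_u:u\le t\}$ of $\tilde B$ at the origin. By \eqref{eqFBM} applied to $\tilde B$, this germ field is contained in the germ field of an associated Brownian motion, which is trivial by Blumenthal's $0$--$1$ law; hence $\mathbb{P}(F_\infty^0=+\infty)\in\{0,1\}$, forcing it to equal $1$. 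Combined with the propagation of the first stage, this gives \eqref{prop}.

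The main obstacle I anticipate is the first stage: establishing the uniform-in-$n$ spatial modulus with a \emph{summable} sequence of constants $C_n$. Lemma \ref{lemma3} only provides, for a fixed time horizon and a fixed compact $K$, finiteness of a single H\"older seminorm; transferring this simultaneously to all dyadic blocks $S_n$ with the correct decay $2^{-nH\beta}$, and controlling moments well enough for Borel--Cantelli, is precisely the new and delicate point that distinguishes the uniform statement of Theorem \ref{LevelSets} from the fixed-level Theorem \ref{thm1}.
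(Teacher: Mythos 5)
Your Stages 2 and 3 are sound, and Stage 3 essentially coincides with the paper's ending (time inversion, the Volterra representation behind \eqref{eqFBM}, and Blumenthal's $0$--$1$ law applied to a tail event). The genuine gap is Stage 1, and you have correctly identified it yourself: the entire content of the uniformity in $x$ is concentrated in the claim
\begin{equation*}
\sup_{x,x'\in K}\frac{|Z_n^x-Z_n^{x'}|}{|x-x'|^{\beta}}\le C_n \quad\text{with}\quad \sum_{n\ge -1}C_n<\infty\ \text{ a.s.,}
\end{equation*}
and nothing available in the paper delivers it. Lemma \ref{lemma3} gives only almost-sure finiteness of the spatial H\"older seminorm on a single block, with no tail or moment control, so the Borel--Cantelli step cannot be run as stated; moreover the self-similarity identity $Z_n^{\cdot}\stackrel{d}{=}Z_0^{2^{-nH}\cdot}$ holds for each $n$ separately but not jointly in $n$ (each $n$ uses a different rescaling of the same path), so the factor $2^{-nH\beta}$ is a gain in distribution only, not an almost-sure simultaneous one. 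Completing your route would require new quantitative estimates (moments or tails of the spatial H\"older seminorm of the local time) that are neither stated nor proved here.

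The paper circumvents exactly this difficulty by a different reduction. Instead of propagating divergence from $x=0$ to all $x$ via a modulus of continuity summable over blocks, it works with the infimum $\widetilde{Z}_n^a=\inf_{|x|\le a}Z_n^x$ and $\widetilde{F}^b_\infty=\sum_n\widetilde{Z}_n^b$, and proves $\mathbb{P}(\widetilde{F}^b_\infty=\infty)>0$ by a Fubini/first-moment argument (Lemma \ref{LemmaF^b}): if $\widetilde{F}^b_\infty\le\gamma$ on a set $A_{\gamma,b}$ of positive probability, summing $\mathbb{E}\big(\mathds{1}_{A_{\gamma,b}}\widetilde{Z}_n^b\big)$ and using only the one-dimensional identity $\widetilde{Z}_n^b\stackrel{d}{=}\widetilde{Z}_0^{2^{-nH}b}$ for each fixed $n$ forces $\mathbb{P}(A_{\gamma,b})\le\mathbb{P}(\widetilde{Z}_0^a\le u)$ for every $u$. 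The spatial H\"older continuity of Lemma \ref{lemma3} is then invoked only once, on the single block $S_0$, to get $\mathbb{P}(\widetilde{Z}_0^a>0)>0$ (Lemmas \ref{az} and \ref{a}). Blumenthal then upgrades $\mathbb{P}(\widetilde{F}^b_\infty=\infty)$ to $1$, and the uniform conclusion follows from $\inf_{|x|\le b}F_\infty^x\ge\widetilde{F}^b_\infty$ and letting $b\to\infty$. In short: where you attempt an almost-sure, simultaneous-in-$n$ comparison of levels, the paper proves divergence of the infimum directly, which needs no such comparison.
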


Note that the following weaker statement of Proposition \ref{proposition4} was shown in \cite{nourdin2018sojourn}: for all $x \in \mathbb{R}$, $\mathbb{P}(F_{\infty}^{x} = + \infty ) = 1$.
Our main contribution in the present note is precisely to prove the strongest version stated in Proposition \ref{proposition4}.


\subsection{Proof of Proposition \ref{proposition4}}

For every $a>0$, define 
\begin{align}
\widetilde{Z}^{a}_{n}= \inf_{x \in[-a,a]} Z^{x}_{n} \quad \mbox{and} \quad \widetilde{F}^{a}_{\infty}= \sum_{n \geq 1} \widetilde{Z}^{a}_{n}.
\label{defY_n^a}
\end{align}
Recalling \eqref{ss}, we get for all $n \geq 0$
\begin{align}
\label{tildeYnx}
\widetilde{Z}^{a}_{n}= \inf_{x \in[-a,a]} Z^{x}_{n} \stackrel{d}{=} \inf_{x \in[-a,a]} Z_{0}^{2^{-nH}x} = \inf_{x \in[-2^{-nH}a,2^{-nH}a]} Z_{0}^{x} = \widetilde{Z}^{2^{-nH}a}_{0}.
\end{align}
In the three forthcoming lemmas,  the following three facts are established:
\begin{enumerate}
\item[(i)] the existence of $\epsilon>0$ such that $\mathbb{P}(Z^{0}_{0}>4\epsilon) > 0$ (Lemma \ref{az}),
\item[(ii)] the existence of $a>0$ such that $ \mathbb{P}(Z_{0}^{0}>4\epsilon) \leq 2 \mathbb{P}(\widetilde{Z}^{a}_{0}>0)$ (Lemma \ref{a}),
\item[(iii)] that $\mathbb{P}\left( \widetilde{F}_{\infty}^{b} = \infty\right) \geq \mathbb{P} \left( \widetilde{Z}_{0}^{a}>0\right)$ for all $b>0$ (Lemma \ref{LemmaF^b}).
\end{enumerate}
Combining the results obtained in (i) to (iii), we deduce that 
\begin{equation}
\mathbb{P}\left( \widetilde{F}_{\infty}^{b} = \infty\right) >0\quad\mbox{for all $b>0$}.
\label{itoiii}
\end{equation}

Set $\widehat{B}_u = u^{2H}B_{1/u}$, $u>0$.
By the time inversion property of the fractional Brownian motion, $\widehat{B}$ is a fractional Brownian motion of Hurst index $H$ as well. 
We can write
\begin{align*}
L^{x}\left(S_n\right)=
 \dfrac{1}{2\pi} \int_{\mathbb{R}} dy \, e^{-iyx} \int_{2^{n-1}}^{2^n} du e^{iyu^{2H}\widehat{B}_{1/u}}.
\end{align*}
As a result, we get that $x \mapsto L^{x}\left(S_n\right)$ is $\sigma\left\{ \widehat{B}_u : u \leq 2^{-(n-1)}\right\}$-measurable, 
implying in turn that
\begin{equation}
\sigma \left\{ \widetilde{Z}^b_n \, : \, n \geq M \right\} \subset \sigma\left\{ \widehat{B}_u : u \leq 2^{-(M-1)}\right\}
\label{sigma}
\end{equation}
for every $M \geq 1$.
Consequently,
\begin{align*}
\displaystyle \left\{\widetilde{F}^b_\infty = \infty\right\} \in \bigcap_{M \geq 1}  \sigma\left\{ \widehat{B}_u : u \leq 2^{-(M-1)}\right\}.
\end{align*}
Using \eqref{eqFBM}, there exists a standard Brownian motion $(W_u)_{u \geq 0}$ defined on the same probability space such that
\begin{equation}
\label{sigmaM}
\displaystyle \left\{\widetilde{F}^b_\infty = \infty\right\} \in \bigcap_{M \geq 1} \sigma \left\{ W_u \, : \, u \leq 2^{-(M-1)}\right\}.
\end{equation}
By the Blumenthal's 0-1 law, the probability $\mathbb{P}\left(\widetilde{F}^b_\infty = \infty \right)$ is either 0 or 1. But by (\ref{itoiii}), this probability is strictly positive;
hence we conclude that 
\begin{equation}
\mathbb{P}\left( \widetilde{F}_{\infty}^{b} = \infty\right) =1\quad\mbox{for all $b>0$}.
\label{itoiiibis}
\end{equation}

For every $b>0$, one has
\begin{eqnarray*}
\label{resultb}
&&\mathbb{P} \left(\forall x \in [-b, b]:\, F_{\infty}^{x}= \infty \right)=
\mathbb{P} \left(\displaystyle \inf_{x \in [-b, b]} F_{\infty}^{x}= \infty \right) =\mathbb{P}\left(\inf_{x \in [-b, b]} \sum_{N \geq 1} Z_{N}^{x}= \infty \right) \\
&\geq& \mathbb{P} \left(\sum_{N \geq 1}\inf_{x \in [-b, b]} Z_{N}^{x} = \infty \right)= \mathbb{P}\left(\widetilde{F}^b_\infty  = \infty \right) =1.
\end{eqnarray*}
We finally conclude that 
\begin{align*}
\mathbb{P}\left(\forall x \in \mathbb{R}, \, F_{\infty}^{x}= \infty \right)= \lim_{b \rightarrow \infty} \mathbb{P}\left(\forall x \in [-b,b], \, F_{\infty}^{x}= \infty \right) = 1,
\end{align*}
which is the desired conclusion of Proposition \ref{proposition4}.

To conclude, it remains to state and prove the three lemmas mentioned in points (i) to (iii).

\begin{lemma}
\label{az}
There exists $\epsilon>0$ small enough such that $ \mathbb{P}(Z^{0}_{0}>4\epsilon) > 0$.
\end{lemma}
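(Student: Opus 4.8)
The plan is to reduce the statement to the strict positivity of a single expectation. First observe that, since $S_0=[2^{-1},2^0)=[\tfrac12,1)$ and $2^{0\cdot(1-H)}=1$, the quantity in question is simply $Z_0^0=L^0(S_0)=L^0([\tfrac12,1))$, the local time of $B$ at level $0$ accumulated over the time interval $[\tfrac12,1)$; in particular $Z_0^0\ge 0$. The events $\{Z_0^0>4\epsilon\}$ increase to $\{Z_0^0>0\}$ as $\epsilon\downarrow 0$, so by continuity of measure it is enough to prove $\mathbb{P}(Z_0^0>0)>0$. Because $Z_0^0\ge 0$, this in turn follows at once from the strict positivity of $\mathbb{E}[Z_0^0]$, so the whole lemma comes down to checking that $\mathbb{E}[Z_0^0]>0$.

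To compute $\mathbb{E}[Z_0^0]$ I would start from the Fourier representation \eqref{deflt}, which gives
\begin{align*}
Z_0^0 = L^0([\tfrac12,1)) = \frac{1}{2\pi}\int_{\mathbb{R}} dy \int_{1/2}^{1} du\, e^{iyB_u}.
\end{align*}
Taking expectations and using $B_u\sim\mathcal{N}(0,u^{2H})$, so that $\mathbb{E}[e^{iyB_u}]=e^{-\frac12 y^2 u^{2H}}$, one is led to
\begin{align*}
\mathbb{E}[Z_0^0] = \frac{1}{2\pi}\int_{1/2}^{1} du \int_{\mathbb{R}} e^{-\frac12 y^2 u^{2H}}\,dy = \frac{1}{\sqrt{2\pi}}\int_{1/2}^{1} u^{-H}\,du.
\end{align*}
Equivalently, this is the standard identity $\mathbb{E}[L^0([\tfrac12,1))]=\int_{1/2}^1 p_u(0)\,du$, with $p_u$ the density of $B_u$ and $p_u(0)=(2\pi u^{2H})^{-1/2}$. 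Since $H\in(0,1)$, the integrand $u^{-H}$ is positive and integrable on $[\tfrac12,1]$, whence $\mathbb{E}[Z_0^0]=\frac{1}{\sqrt{2\pi}}\cdot\frac{1-2^{-(1-H)}}{1-H}>0$. This yields $\mathbb{P}(Z_0^0>0)>0$, and choosing $\epsilon>0$ small enough gives $\mathbb{P}(Z_0^0>4\epsilon)>0$.

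The only point requiring care — and the one I would single out as the main technical obstacle — is the interchange of the expectation with the $dy$-integral: the integrand $e^{iyB_u}$ is bounded but not absolutely integrable jointly in $(y,u,\omega)$, so a naive Fubini argument does not apply. I would instead justify the identity through the occupation-density interpretation, namely $\mathbb{E}\big[\mathrm{Leb}\{u\in[\tfrac12,1):B_u\in A\}\big]=\int_{1/2}^1\mathbb{P}(B_u\in A)\,du$ for Borel sets $A$, which upon unfolding the definition of the local time as an occupation density gives $\mathbb{E}[L^x([\tfrac12,1))]=\int_{1/2}^1 p_u(x)\,du$ for Lebesgue-almost every $x$; the spatial continuity of the local time (Lemma \ref{lemma3}, equivalently Berman's continuity) then lets me extend the identity to the specific level $x=0$. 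Once this identity is secured the positivity is immediate and the lemma follows.
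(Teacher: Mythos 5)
Your proposal is correct and follows essentially the same route as the paper: both reduce the lemma to the positivity of $\mathbb{E}\bigl[L^0([\tfrac12,1])\bigr]$, compute this expectation via the Fourier representation \eqref{deflt} to obtain $\frac{1}{\sqrt{2\pi}}\int_{1/2}^{1}u^{-H}\,du>0$, and conclude. Your additional justification of the expectation--integral interchange via the occupation-density identity is a welcome extra precaution that the paper's proof leaves implicit, but it does not change the argument.
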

\begin{proof}
Using that  
$\displaystyle
L^{0}\left([\frac12,1]\right)= \dfrac{1}{2\pi} \int_{\mathbb{R}} dy  \int_{\frac12}^{1} du \, e^{iyB_u}  
$, we have
\begin{align*}
\mathbb{E}\left(L^{0}\left( \left[\frac{1}{2},1\right] \right)\right)=  \dfrac{1}{2\pi} \int_{\frac{1}{2}}^{1}  u^{-H} \, du \, \int_{\mathbb{R}}   e^{-\frac{z^2}{2}} \, dz=  \dfrac{1}{\sqrt{2\pi}} \int_{\frac{1}{2}}^{1} u^{-H} \, du >0.
\end{align*} 
As a result, $ \mathbb{P}\left(Z^{0}_{0}>0\right) = \mathbb{P}\left(L^{0}\left( \left[\frac{1}{2},1\right] \right)>0\right) >0 $, and the desired conclusion follows.
\end{proof}

\begin{lemma}
\label{a}
For every $\epsilon>0$ small enough, there exists a real number $a>0$ such that 
\begin{align*}
0< \mathbb{P}(Z_{0}^{0}>4\epsilon) \leq 2 \mathbb{P}(\widetilde{Z}^{a}_{0}>0).
\end{align*}
\end{lemma}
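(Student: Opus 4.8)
The plan is to exploit the spatial Hölder continuity of the local time (Lemma~\ref{lemma3}) to propagate the strict positivity of $L^0(S_0)$ to an entire spatial neighbourhood $[-a,a]$ of the level $0$, while keeping control of the (random) Hölder constant via a probability estimate. Recall that $Z_0^x = L^x(S_0)$ with $S_0 = [1/2,1)$, so that $Z_0^0 = L^0(S_0) = L_1^0 - L_{1/2}^0$ and $\widetilde Z_0^a = \inf_{x\in[-a,a]} L^x(S_0)$, where $(L_t^x)$ denotes the occupation density of $B$ over $[0,t]$ at level $x$. Fix $\epsilon>0$ as provided by Lemma~\ref{az}, so that $p := \mathbb P(Z_0^0 > 4\epsilon) > 0$; this already yields the left-hand inequality $0 < \mathbb P(Z_0^0 > 4\epsilon)$. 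Applying Lemma~\ref{lemma3} on the compact interval $K=[-1,1]$, separately to $L_1^{\,\cdot}$ and to $L_{1/2}^{\,\cdot}$ (with any fixed $\beta \in (0,\tfrac12(\tfrac1H-1))$) and combining the two almost sure events through the triangle inequality, I obtain an almost surely finite random variable $C$ such that
\[
\left| L^x(S_0) - L^0(S_0) \right| \;\leq\; C\,|x|^\beta \qquad \text{for all } x \in [-1,1].
\]

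Next I would observe that, for any $a\in(0,1]$, on the event $\{Z_0^0 > 4\epsilon\}\cap\{C a^\beta \leq 2\epsilon\}$ every $x\in[-a,a]$ satisfies $L^x(S_0) \geq L^0(S_0) - C|x|^\beta \geq Z_0^0 - C a^\beta > 4\epsilon - 2\epsilon = 2\epsilon$, so that $\widetilde Z_0^a \geq 2\epsilon > 0$. Hence
\[
\mathbb P(\widetilde Z_0^a > 0) \;\geq\; \mathbb P\big(\{Z_0^0 > 4\epsilon\}\cap\{C a^\beta \leq 2\epsilon\}\big) \;\geq\; p - \mathbb P(C a^\beta > 2\epsilon).
\]
Because $C<\infty$ almost surely, we have $\mathbb P(C a^\beta > 2\epsilon) = \mathbb P(C > 2\epsilon\,a^{-\beta}) \to 0$ as $a\to 0^+$, so I may fix $a\in(0,1]$ small enough that $\mathbb P(C a^\beta > 2\epsilon) \leq p/2$. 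For such an $a$ this gives $\mathbb P(\widetilde Z_0^a > 0) \geq p/2$, that is $\mathbb P(Z_0^0 > 4\epsilon) \leq 2\,\mathbb P(\widetilde Z_0^a > 0)$, as required.

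The step I expect to be the crux is the passage from a pathwise Hölder estimate with a random constant to an actual deterministic choice of the radius $a$: since the constant $C$ produced by Lemma~\ref{lemma3} is random, no fixed $a$ can guarantee $\widetilde Z_0^a>0$ pathwise on the whole event $\{Z_0^0 > 4\epsilon\}$. The resolution is to absorb this randomness into the probability bound above, replacing a deterministic guarantee by the tail control $\mathbb P(C > 2\epsilon a^{-\beta}) \to 0$ afforded by the almost sure finiteness of $C$; the factor $2$ in the statement is exactly what permits discarding the bad event $\{C a^\beta > 2\epsilon\}$, whose probability can be made at most $p/2$.
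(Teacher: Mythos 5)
Your proposal is correct and follows essentially the same route as the paper: both use the spatial H\"older continuity of the local time (Lemma \ref{lemma3}) to transfer positivity of $L^0(S_0)$ to a neighbourhood $[-a,a]$, and both absorb the randomness of the H\"older constant into a tail event of probability at most $\tfrac12\mathbb{P}(Z_0^0>4\epsilon)$. The only cosmetic difference is that the paper packages the random constant into a random radius $\eta_\epsilon$ and bounds $\mathbb{P}(\eta_\epsilon\leq a)$, whereas you keep the constant $C$ explicit and bound $\mathbb{P}(Ca^\beta>2\epsilon)$; these are equivalent.
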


\begin{proof}
Let $\beta < \frac{1}{2} \left( \frac{1}{H} - 1 \right)$, $K=[-1,1]$ and $J=[\frac{1}{2},1]$. 
Set 
$$
c=c(\omega):=\sup_{x\in K\setminus\{0\}}\frac{\big| L^0(J)(\omega)-L^x(J)(\omega)\big|}{|x|^\beta}.
$$
By Lemma \ref{lemma3}, we have that $\mathbb{P}(c<\infty)=1$.

Set $\eta_\epsilon=\eta_\epsilon(\omega):= \min \left\{ \left( \dfrac{\epsilon}{c(\omega)}\right)^{1 / \beta},1\right\}$. As $[-\eta_\epsilon, \eta_\epsilon ] \subset [-1,1]$, one has
\begin{align}
\label{equation1}
\forall |x| \leq \eta_\epsilon(\omega), \,  \left| (L^{0}_{1}(\omega) -  L^{x}_{1}(\omega)) - (L^{0}_{\frac{1}{2}}(\omega) -  L^{x}_{\frac{1}{2}}(\omega))\right|\leq \epsilon.
\end{align}
By triangle inequality,
\begin{align}
\label{equation2}
\left|L^{x}_{1}-  L^{x}_{\frac{1}{2}}\right| \geq \left|L^{0}_{1} -  L^{0}_{\frac{1}{2}}\right|- \left| (L^{0}_{1} -  L^{x}_{1}) - (L^{0}_{\frac{1}{2}} -  L^{x}_{\frac{1}{2}})\right|.
\end{align}
Using \eqref{equation1} and \eqref{equation2}, we have 
\begin{align}
\left\{Z_0^0 = L^{0}_{1}-  L^{0}_{\frac{1}{2}}>4\epsilon \right\} \subset \left\{ \forall |x| \leq \eta_\epsilon(\omega), \, |L^{x}_{1} - L^{x}_{\frac{1}{2}}| \geq 3 \epsilon \right\}.
\label{equation3}
\end{align}
But $\displaystyle \left\{ \forall |x| \leq \eta_\epsilon(\omega), \, |L^{x}_{1} - L^{x}_{\frac{1}{2}}| \geq 3\epsilon)\right\} = \left\{ \inf_{x \in [-\eta_\epsilon, \eta_\epsilon]} |L^{x}_{1} - L^{x}_{\frac{1}{2}}| \geq 3\epsilon\right\}$. Recalling the definition of $\widetilde{Z}_{0}^{\eta_\epsilon}$, we deduce that
\begin{align}
\mathbb{P} \left(\widetilde{Z}_{0}^{\eta_\epsilon}>0\right) \geq \mathbb{P} \left(\widetilde{Z}_{0}^{\eta_\epsilon}>3\epsilon\right)\geq \mathbb{P}\left( Z_{0}^{0}> 4 \epsilon \right) > 0.
\label{eq1}
\end{align}

Now for all $a>0$, we have
\begin{align}
\left\{ \widetilde{Z}_{0}^{\eta_\epsilon} >0 \right\} \subset \left\{ \widetilde{Z}_{0}^{a} >0 \right\} \cup \left\{ \eta_\epsilon \leq a \right\}.
\label{eq2}
\end{align}
Since $c < \infty$ a.s., one has that $\mathbb{P}\left( c \geq M \right) \rightarrow 0$ as $M \rightarrow \infty$. We can then choose $a>0$ small enough such that 
\begin{align}
\mathbb{P} \left(\eta_\epsilon \leq a \right)= \mathbb{P}\left(c \geq \frac{\epsilon}{2a \beta} \right) \leq \frac{1}{2} \mathbb{P} \left( Z_0^0 > 4\epsilon \right).
\label{eq3}
\end{align}
Using \eqref{eq1}, \eqref{eq2} and \eqref{eq3} we deduce that
\begin{align*}
\mathbb{P} \left( Z_{0}^{0}  > 4 \epsilon \right) \leq 
 \mathbb{P} \left( \widetilde{Z}_{0}^{\eta_\epsilon} > 0 \right) \leq 
\mathbb{P} \left( \widetilde{Z}_{0}^{a} >0 \right) + \mathbb{P} \left( \eta_\epsilon \leq a \right)  \leq 
 \mathbb{P} \left( \widetilde{Z}_{0}^{a}>0\right) + \frac{1}{2} \mathbb{P}\left( Z_{0}^0 > 4 \epsilon\right).
\end{align*}
Finally, this yields 
\begin{align*}
0< \mathbb{P} \left(Z_{0}^{0}  > 4 \epsilon \right) \leq 2\mathbb{P} \left( \widetilde{Z}_{0}^{a}>0\right),
\end{align*}
which is the desired conclusion.
\end{proof}

\begin{lemma}
\label{LemmaF^b}
For any $a,b>0$, we have 
\begin{align*}
\mathbb{P}\left( \widetilde{F}_{\infty}^{b} = \infty\right) \geq \mathbb{P} \left( \widetilde{Z}_{0}^{a}>0\right).
\end{align*} 
\end{lemma}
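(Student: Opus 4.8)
The plan is to bound $\mathbb{P}(\widetilde{F}^b_\infty=\infty)$ from below by the probability that infinitely many summands of $\widetilde{F}^b_\infty=\sum_{n\geq 1}\widetilde{Z}^b_n$ exceed a fixed positive level, and then to transport the bound to scale $0$ via the self-similarity identity \eqref{tildeYnx}. Fix $a,b>0$. Since the local time is nonnegative, each $\widetilde{Z}^b_n\geq 0$, so for every $\delta>0$ we have the inclusion
\begin{align*}
\limsup_{n\to\infty}\{\widetilde{Z}^b_n>\delta\}=\{\widetilde{Z}^b_n>\delta\ \text{for infinitely many }n\}\subset\{\widetilde{F}^b_\infty=\infty\},
\end{align*}
because infinitely many terms larger than $\delta$ already force divergence. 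Combining this inclusion with the reverse Fatou inequality $\mathbb{P}(\limsup_n A_n)\geq\limsup_n\mathbb{P}(A_n)$ (valid for the finite measure $\mathbb{P}$) then yields
\begin{align*}
\mathbb{P}(\widetilde{F}^b_\infty=\infty)\geq\limsup_{n\to\infty}\mathbb{P}(\widetilde{Z}^b_n>\delta).
\end{align*}

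Next I would estimate the marginals. By \eqref{tildeYnx}, $\mathbb{P}(\widetilde{Z}^b_n>\delta)=\mathbb{P}(\widetilde{Z}^{2^{-nH}b}_0>\delta)$. The map $c\mapsto\widetilde{Z}^c_0=\inf_{|x|\leq c}Z^x_0$ is nonincreasing in $c$ (a larger interval produces a smaller infimum), and $2^{-nH}b\to 0$, so for all $n$ large enough that $2^{-nH}b\leq a$ one has $\widetilde{Z}^{2^{-nH}b}_0\geq\widetilde{Z}^a_0$ pathwise, hence $\mathbb{P}(\widetilde{Z}^{2^{-nH}b}_0>\delta)\geq\mathbb{P}(\widetilde{Z}^a_0>\delta)$. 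Taking the limsup gives $\mathbb{P}(\widetilde{F}^b_\infty=\infty)\geq\mathbb{P}(\widetilde{Z}^a_0>\delta)$ for every $\delta>0$. Letting $\delta\downarrow 0$ and using continuity from below of $\mathbb{P}$, since $\{\widetilde{Z}^a_0>\delta\}\uparrow\{\widetilde{Z}^a_0>0\}$, I would conclude $\mathbb{P}(\widetilde{F}^b_\infty=\infty)\geq\mathbb{P}(\widetilde{Z}^a_0>0)$, which is the assertion.

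The step that looks like the main obstacle is that the family $(\widetilde{Z}^b_n)_{n\geq 1}$ is highly dependent, so one cannot invoke the second Borel--Cantelli lemma to force infinitely many exceedances. The reverse Fatou inequality circumvents this completely: it converts a tail lower bound on the one-dimensional marginals $\mathbb{P}(\widetilde{Z}^b_n>\delta)$ into a lower bound on the probability of the limsup event, requiring no independence or joint-law information. Two points must be handled with care. First, the threshold has to be strictly positive, because only infinitely many exceedances of a positive level guarantee divergence of a nonnegative series; this is why I keep $\delta>0$ throughout and only send $\delta\downarrow 0$ at the very end. Second, the pathwise comparison $\widetilde{Z}^{2^{-nH}b}_0\geq\widetilde{Z}^a_0$ holds only for $n\geq N(a,b,H)$, which is harmless inside a limsup. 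Notably, the argument relies solely on the scaling relation \eqref{tildeYnx} and the trivial monotonicity of the infimum, and not on the finer regularity of Lemma \ref{lemma3}.
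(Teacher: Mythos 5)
Your proof is correct, and it takes a genuinely different route from the paper's. You lower-bound $\mathbb{P}(\widetilde{F}^b_\infty=\infty)$ by the probability of the limsup of the exceedance events $\{\widetilde{Z}^b_n>\delta\}$ and then apply the reverse Fatou inequality $\mathbb{P}(\limsup_n A_n)\geq\limsup_n\mathbb{P}(A_n)$, which, as you note, requires no joint-law information about the (highly dependent) family $(\widetilde{Z}^b_n)_{n}$; the scaling identity \eqref{tildeYnx} together with the monotonicity of $c\mapsto\widetilde{Z}^c_0$ then reduces everything to the single marginal $\widetilde{Z}^a_0$, and the limit $\delta\downarrow 0$ is handled correctly by continuity from below. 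The paper instead runs a first-moment argument: it truncates on the event $A_{\gamma,b}=\{\widetilde{F}^b_\infty\leq\gamma\}$, writes $\gamma\geq\mathbb{E}\big(\mathds{1}_{A_{\gamma,b}}\widetilde{F}^b_\infty\big)$ via Fubini as a series of integrals of $\mathbb{P}\big(A_{\gamma,b}\cap\{\widetilde{Z}^b_n>u\}\big)$, bounds these below by $\big(\mathbb{P}(A_{\gamma,b})-\mathbb{P}(\widetilde{Z}^a_0\leq u)\big)_+$ using the same scaling and monotonicity, and concludes that the summand, being constant in $n$ over a convergent series, must vanish, i.e. $\mathbb{P}(\widetilde{F}^b_\infty\leq\gamma)\leq\mathbb{P}(\widetilde{Z}^a_0\leq u)$ for all $\gamma,u\geq 0$. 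Your argument is shorter and more elementary; the paper's yields as a by-product the quantitative comparison $\mathbb{P}(\widetilde{F}^b_\infty>\gamma)\geq\mathbb{P}(\widetilde{Z}^a_0>0)$ for every finite $\gamma$, though that extra information is not used elsewhere in the note. Both proofs rest on exactly the same two structural inputs, namely \eqref{tildeYnx} and the pathwise inequality $\widetilde{Z}^{2^{-nH}b}_0\geq\widetilde{Z}^a_0$ once $2^{-nH}b\leq a$, so yours is a legitimate drop-in replacement.
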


\begin{proof}
Fix $\gamma>0$ and $a,b >0$, consider the event $A_{\gamma, b} = \left\{ \widetilde{F}_{\infty}^{b} \leq \gamma\right\}$. 
By Fubini's theorem, 
\begin{align*}
\gamma \geq \mathbb{E} \left(\mathds{1}_{A_{\gamma, b}} \widetilde{F}_{\infty}^{b}\right) = \sum_{n \geq -1} \mathbb{E}\left( \mathds{1}_{A_{\gamma, b}} \widetilde{Z}_{n}^{b} \right) = \sum_{n \geq -1} \int_{0}^{\infty} \mathbb{P} \left( A_{\gamma, b} \cap \{ \widetilde{Z}_{n}^{b} > u \} \right)du.
\end{align*}
Using $\mathbb{P}\left(A \cap B\right) \geq \left( \mathbb{P}(A) - \mathbb{P}(B^c)\right)_{+}$ where $B^c$ denotes the complement of $B$, and recalling \eqref{tildeYnx}, we deduce that 
\begin{align*}
\gamma \geq \sum_{n \geq 0} \int_{0}^{\infty} \left( \mathbb{P} \left( A_{\gamma, b} \right) - \mathbb{P} \left( \widetilde{Z}_{n}^{b}  \leq u \right) \right)_{+} \, du= \sum_{n \geq 0} \int_{0}^{\infty} \left( \mathbb{P} \left( A_{\gamma,b} \right) - \mathbb{P} \left( \widetilde{Z}_{0}^{2^{-nH}b}  \leq u \right) \right)_{+}du.
\end{align*}
There exists $M\geq1$ such that $2^{-nH}b \leq a$  for all $n \geq M$. Then, for all $n\geq M$,
\begin{align*}
\mathbb{P} \left( \widetilde{Z}_{0}^{2^{-nH}b}  \leq u \right) \leq \mathbb{P} \left( \widetilde{Z}_{0}^{a}  \leq u \right)
\end{align*}
and
\begin{align*}
\gamma \geq \sum_{n \geq M} \int_{0}^{\infty} \left( \mathbb{P} \left( A_{\gamma, b} \right) - \mathbb{P} \left( \widetilde{Z}_{0}^{a}  \leq u \right) \right)_{+}du.
\end{align*}
Since the summand does not depend on $n$ and the series is bounded by $\gamma$ and thus finite, one has  necessarily
\begin{align*}
\int_{0}^{\infty} \left( \mathbb{P} \left( A_{\gamma, b} \right) - \mathbb{P} \left( \widetilde{Z}_{0}^{a}  \leq u \right) \right)_{+}du = 0. 
\end{align*}
Hence, for almost every $u \geq 0$ and every $\gamma \geq 0$,
\begin{align}
\mathbb{P} \left( \widetilde{F}_{\infty}^{b} \leq \gamma\right) =\mathbb{P} \left( A_{\gamma, b} \right) \leq \mathbb{P} \left( \widetilde{Z}_{0}^{a}  \leq u \right).
\label{result1}
\end{align}
We know that $\mathbb{P}(\widetilde{Z}_{0}^{a}  \leq u)$ is increasing as a function of $u$. Hence, (\ref{result1}) is actually true for {\it every} $u \geq 0$ and $\gamma \geq 0$.
Hence $\mathbb{P} \left( \widetilde{F}_{\infty}^{b} > n \right) \geq \mathbb{P} \left( \widetilde{Z}_{0}^{a}  > \frac{1}{n} \right)$ for all $n \in \mathbb{N}$. One conclude that 
\begin{align*}
\mathbb{P} \left( \widetilde{F}_{\infty}^{b} = \infty \right) \geq \mathbb{P} \left( \widetilde{Z}_{0}^{a}  > 0 \right).
\end{align*}
\end{proof}

{\bf Acknowledgements}. I thank my two advisors, Ivan Nourdin and St\'ephane Seuret, for their guidance in the elaboration of this note.

\bibliography{biblio/biblio_lara} 
\bibliographystyle{abbrv}

\end{document}